\newtheorem{thm}{Theorem}
\newtheorem{lem}[thm]{Lemma}
\newtheorem{prob}{Problem}
\title[The structure of maximal tori]{The structure of maximal tori \\ in spin groups of type $D_l$}
\author{Andrei V. Zavarnitsine}
\address{\textup{\scriptsize
Andrei V. Zavarnitsine\\
Group Theory Lab.\\
Sobolev Institute of Mathematics\\
4, Koptyug av.\\
630090, Novosibirsk, Russia\\
\texttt{and} Mechanics and Mathematics Dept.\\
Novosibirsk State University\\
2, Pirogova st.\\
630090, Novosibirsk, Russia
}}
\email{zav@math.nsc.ru}
\thanks{Supported by the RScF (project 14--21--00065)}
\date{}
\begin{document}
\begin{abstract} We determine the abelian invariants of the maximal tori in the finite spin groups of type $D_l$.

{\sc Keywords:} Spin groups, maximal tori

{\sc MSC2010:}  20E34,  
                20G15   

{\sc UDC:} 512.54 
\end{abstract}
\maketitle

\section{Introduction}

The maximal tori in finite groups of Lie type arise as fixed points of Frobenius endomorphisms acting on the corresponding maximal tori of the ambient algebraic groups. Being abelian, the finite maximal tori decompose into a direct product of cyclic groups. An explicit form of this decomposition has been determined for many classical and Chevalley groups, see, e.\,g. \cite{07ButGr.t, 91DerFak}. This information is important, for example, in the study of the set of element orders and some problems of representation theory. In this paper, our aim is to find a similar cyclic decomposition for the tori in the groups $\operatorname{Spin}_{2l}^\pm(q)$.

Let $F$ be an algebraically closed field of positive characteristic $p$ and let $\mathbb{G}$ be a simply-connected simple linear algebraic group of type $D_l$ over $F$. It is well known that $\mathbb{G}\cong \operatorname{Spin}_{2l}(F)$.
Let $\sigma$ be a Frobenius endomorphism of $\mathbb{G}$ associated with the field automorphism $[t\mapsto t^q]$ of $F$, where $q=p^m$, and a symmetry of the Dynkin diagram of type $D_l$ of order $1$ or $2$. Then $G=C_{\mathbb{G}}(\sigma)$ is isomorphic to the finite spin group $\operatorname{Spin}_{2l}^\pm(q)$, where the sign is $+$ or $-$ according as the symmetry has order $1$ or $2$. If $\mathbb{T}$ is a $\sigma $-invariant maximal torus of $\mathbb{G}$ then we say that $T=C_\mathbb{T}(\sigma)$ is a maximal torus of the finite group $G$.

The $G$-conjugacy classes of $\sigma$-invariant maximal tori of $\mathbb{G}$ are in a one-to-one correspondence with
the $\sigma$-conjugacy classes of the Weyl group $W=W(D_l)$. The corresponding tori $T$ are thus parameterized by signed partitions $l=l'+l''$, with $l'=l_1+\ldots+l_r$ and $l''=l_{r+1}+\ldots+l_{r+s}$ corresponding to positive and negative terms. Such a partition parameterizes a torus in $\operatorname{Spin}_{2l}^+(q)$ if $s$ is even, and in $\operatorname{Spin}_{2l}^-(q)$ if $s$ is odd, whose structure can be described uniformly in both cases.

Denote $L'=\{l_1,\ldots,l_r\}$ and $L''=\{l_{r+1},\ldots,l_{r+s}\}$. Also, set $\varepsilon_i=1$ if $1\leqslant i\leqslant r$ and $\varepsilon_i=-1$ if $r+1\leqslant i\leqslant r+s$. Let $\mathbb{Z}_n$ denote a cyclic group of order $n$. We prove

\begin{thm}\label{main} Let $G\cong \operatorname{Spin}_{2l}^\pm(q)$ and let $T$ be a maximal torus of $G$ corresponding to the signed partition of $l$ as above.
\begin{enumerate}
\item[$(i)$] If there are odd elements $l_i\in L'$ and $l_j\in L''$ then
$$
T\cong \mathbb{Z}_{(q^{l_i}-1)(q^{l_j}+1)}\times \prod_{k\ne i,j}\mathbb{Z}_{q^{l_k}-\varepsilon_k}
$$
\item[$(ii)$] If there is odd $l_i$ in either $L'$ or $L''$ but not both, and there is even $l_j$ in $L''$ then
$$
T\cong \mathbb{Z}_{(q^{l_i}-\varepsilon_i)(q^{l_j}+1)}\times \prod_{k\ne i,j}\mathbb{Z}_{q^{l_k}-\varepsilon_k}
$$
\item[$(iii)$] If all elements of $L'$ are even and $L''=\varnothing$ then choose $l_i\in L'$ with minimal $2$-part. In this case,
$$
T\cong \mathbb{Z}_{q^{l_i/2}-1}\times \mathbb{Z}_{q^{l_i/2}+1}\times \prod_{k\ne i}\mathbb{Z}_{q^{l_k}-1}
$$
\item[$(iv)$] In all other cases,
$$T\cong \prod_{k}\mathbb{Z}_{q^{l_k}-\varepsilon_k}$$
\end{enumerate}
$($In all products above, $1\leqslant k \leqslant r+s$.$)$
\end{thm}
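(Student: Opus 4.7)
The proof is by direct lattice computation via the standard identification $T\cong Y/(F-1)Y$, where $Y=X_*(\mathbb{T})$ is the cocharacter lattice. Since $\mathbb{G}$ is simply connected of type $D_l$, one identifies $Y$ with the coroot lattice
$$
Q\;=\;\bigl\{\,x\in\mathbb{Z}^l:\ x_1+\cdots+x_l\in 2\mathbb{Z}\,\bigr\},
$$
an index-$2$ sublattice of $\mathbb{Z}^l$ (itself the cocharacter lattice of the isogenous $\mathrm{SO}_{2l}$-type group). The Frobenius acts as $F=qw$, where $w$ realizes the signed partition as a signed permutation of the standard basis: each positive block of length $l_k$ is an $l_k$-cycle and each negative block is an $l_k$-cycle twisted by a single sign change.

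The first step is to compute the ambient cokernel $T':=\mathbb{Z}^l/(F-1)\mathbb{Z}^l$ blockwise. In each $l_k$-block the matrix $F-1$ has characteristic polynomial $t^{l_k}-\varepsilon_k q^{l_k}$ and Smith normal form $\mathrm{diag}(1,\ldots,1,q^{l_k}-\varepsilon_k)$, so
$$
T'\;\cong\;\prod_{k=1}^{r+s}\mathbb{Z}_{q^{l_k}-\varepsilon_k},
$$
the class $g_k$ of any basis vector in the $k$-th block generating the $k$-th factor.

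I would then apply the snake lemma to $0\to Q\to\mathbb{Z}^l\to\mathbb{Z}/2\to 0$ with the vertical map $F-1$. Because $w$ permutes the $e_i$ up to sign and $-e_i\equiv e_i\pmod Q$, $w$ acts trivially on $\mathbb{Z}^l/Q$, so $F$ acts there as multiplication by $q\bmod 2$. In characteristic $2$ the map $F-1$ is thus an isomorphism on $\mathbb{Z}/2$ and $T\cong T'$, settling case $(iv)$ in that characteristic. For odd $q$ the snake lemma yields the four-term sequence
$$
0\longrightarrow\mathbb{Z}/2\longrightarrow T\longrightarrow T'\longrightarrow\mathbb{Z}/2\longrightarrow 0,
$$
so $T$ is a central extension by $\mathbb{Z}/2$ of the index-$2$ subgroup $N=\ker\bigl(T'\to\mathbb{Z}/2,\ g_k\mapsto 1\bigr)$, and a generator of the $\mathbb{Z}/2$ kernel is the class of $(F-1)e_{i_1}$ modulo $(F-1)Q$.

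The remainder is the case analysis. Presenting $T$ by the generators $\{2g_1\}\cup\{g_1+g_k\}_{k\geq 2}$ of $N$ together with the lifted $\mathbb{Z}/2$ generator, I would write down the relations imposed by the orders $q^{l_k}-\varepsilon_k$ and reduce the resulting integer matrix to Smith normal form. The whole difference between $T$ and $T'$ lives in the Sylow $2$-subgroup, and the main obstacle is tracking $2$-adic valuations via
$$
v_2(q^l-1)=\begin{cases}v_2(q-1),&l\ \text{odd},\\ v_2(q-1)+v_2(q+1)+v_2(l)-1,&l\ \text{even},\end{cases}\qquad v_2(q^l+1)=\begin{cases}v_2(q+1),&l\ \text{odd},\\ 1,&l\ \text{even},\end{cases}
$$
together with $\min\{v_2(q-1),v_2(q+1)\}=1$ for odd $q$. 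These comparisons dictate whether the extension class merges an odd positive and an odd negative factor into a single cyclic factor (case $(i)$), merges an odd factor with an even negative factor (case $(ii)$), splits the even positive factor $\mathbb{Z}_{q^{l_i}-1}$ of minimal $2$-part into $\mathbb{Z}_{q^{l_i/2}-1}\times\mathbb{Z}_{q^{l_i/2}+1}$ (case $(iii)$, where the minimality condition is exactly what picks out the correct invariant factor to split), or leaves the decomposition of $T'$ intact (case $(iv)$).
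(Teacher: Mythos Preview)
Your route differs markedly from the paper's. The paper works in the character lattice $X$ (the weight lattice, since $\mathbb{G}$ is simply connected), writes the matrix of $\sigma w^{-1}-1$ explicitly in the basis of fundamental weights, and brings it to Smith normal form through a long sequence of concrete row and column operations (Lemma~\ref{mt}, Lemma~\ref{ex}, and the reductions (\ref{orm1})--(\ref{orm6}) in Section~3). You instead work in the cocharacter lattice $Y=Q^\vee$, embed it as the index-$2$ ``even'' sublattice of $\mathbb{Z}^l$ (the cocharacter lattice of the isogenous orthogonal-type group), and use the snake lemma to exhibit $T$ as a $\mathbb{Z}/2$-extension of an index-$2$ subgroup of the fully decomposed torus $T'=\prod_k\mathbb{Z}_{q^{l_k}-\varepsilon_k}$. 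This is an attractive reorganisation: it explains structurally why only the $2$-Sylow of $T$ can differ from that of $T'$ and why even $q$ is immediate, and it replaces the paper's basis-change matrix $S$ by the conceptual isogeny $\operatorname{Spin}\to\operatorname{SO}$.

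Where the proposal is incomplete is the final paragraph. The sentence ``I would write down the relations\ldots and reduce the resulting integer matrix to Smith normal form'' is precisely the computation that carries the theorem, and the four cases correspond to four genuinely different ways the lifted $\mathbb{Z}/2$ generator (the class of $(F-1)e_1$ in $Q/(F-1)Q$) interacts with the $2$-Sylow of $N$. Listing the relevant $2$-adic valuations does not by itself determine the invariant factors of the extension; one must still check, e.g., that in case~$(i)$ the lifted generator pairs with the right $g_k$'s to produce a single cyclic factor of order $(q^{l_i}-1)(q^{l_j}+1)$, that in case~$(iii)$ the minimal-$2$-part hypothesis is exactly what forces the factor $\mathbb{Z}_{q^{l_i}-1}$ to split rather than another one, and that in case~$(iv)$ the extension genuinely reassembles $N$ back into $T'$ rather than producing a non-isomorphic group of the same order. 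The paper's explicit transitions from (\ref{orm4}) to (\ref{orm5}) to (\ref{orm6}), and the repeated invocations of Lemma~\ref{mt}$(iii)$,$(iv)$ together with Lemma~\ref{nt}, are doing exactly this work; your framework would require a comparable case-by-case verification, which you have outlined but not executed.
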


We remark that in case $(iii)$, which only occurs for  $\operatorname{Spin}_{2l}^+(q)$, $l$ even, there are two $\sigma$-conjugacy classes of the Weyl group associated with each partition of $l$, but the corresponding tori are isomorphic as we show below.
Also, note that we make no assumptions on $q$ and $l$ in the theorem, but for small $l$ the structure of maximal tori in  $\operatorname{Spin}_{2l}^\pm(q)$ might as well be derived from known results due to the exceptional isomorphisms
\begin{align*}
\operatorname{Spin}_{2}^\pm(q)\cong \mathbb{Z}_{q\mp 1},& \qquad \operatorname{Spin}_{4}^+(q)\cong \operatorname{SL}_2(q)\times \operatorname{SL}_2(q), \qquad \operatorname{Spin}_{4}^-(q)\cong \operatorname{SL}_2(q^2),\\
&\operatorname{Spin}_{6}^+(q)\cong \operatorname{SL}_4(q), \qquad \operatorname{Spin}_{6}^-(q)\cong \operatorname{SU}_4(q).
\end{align*}
Similarly, for $q$ even,  $\operatorname{Spin}_{2l}^\pm(q)$ is isomorphic to $\Omega_{2l}^\pm(q)$ whose maximal tori are known  \cite[Theorem 7]{07ButGr.t}. In this case, all tori fully decompose as in $(iv)$.

If $q$ is odd and, under the assumptions of case $(i)$, there is also an even $l_k$ in $L''$, then the torus $T$ also admits the decomposition as in $(ii)$, namely
$$
T\cong \mathbb{Z}_{(q^{l_t}-\varepsilon_t)(q^{l_k}+1)}\times \prod_{n\ne k,t}\mathbb{Z}_{q^{l_n}-\varepsilon_n},
$$
where $t\in\{i,j\}$ is such that $q\equiv \varepsilon_t\pmod 4$. This follows from Lemma \ref{nt}$(vi)$ below.

Another remark is that due to the duality of algebraic groups, Theorem \ref{main} can also be used to determine the structure of maximal tori in finite adjoint groups of type $D_l$ which are isomorphic to $PCO_{2l}^\pm(q)^\circ$, for details see \cite[Sections 1.19, 4.4]{85Car}.

As an example, we list explicitly in Table \ref{sp8} below the structure of maximal tori in $\operatorname{Spin}_8^\pm(q)$.
The following problem remains open.
\begin{prob} Determine the cyclic structure of the maximal tori in the finite spin
groups of type $B_l$.
\end{prob}

\section{Preliminaries}

\begin{floatingfigure}[l]{35mm}
\setlength{\unitlength}{.06mm}
\begin{picture}(485,650)(20,0)

 \put (250,490){\line(0,1){120}}
 \put (251,490){\line(0,1){120}}
 \put (252,490){\line(0,1){120}}

 \put (250,250){\line(0,1){120}}
 \put (250,250){\line(2,-1){120}}
 \put (250,250){\line(-2,-1){120}}

 \put (251,250){\line(0,1){120}}
 \put (251,250){\line(2,-1){120}}
 \put (251,250){\line(-2,-1){120}}

 \put (252,250){\line(0,1){120}}
 \put (252,250){\line(2,-1){120}}
 \put (252,250){\line(-2,-1){120}}

 \put (253,250){\line(2,-1){120}}
 \put (249,250){\line(-2,-1){120}}

 \put (251,610){\circle*{22}} \put (282,615){$\alpha_1$}
 \put (251,490){\circle*{22}} \put (282,495){$\alpha_2$}

 \put (251,410){\circle*{2}}
 \put (251,430){\circle*{2}}
 \put (251,450){\circle*{2}}

 \put (251,370){\circle*{22}} \put (282,375){$\alpha_{l-3}$}
 \put (251,250){\circle*{22}} \put (282,267){$\alpha_{l-2}$}
 \put (131,190){\circle*{22}} \put (55,140) {$\alpha_{l-1}$}
 \put (371,190){\circle*{22}} \put (395,140){$\alpha_l$}

 \thinlines
 \put (280,188){\vector(1,0){60}}
 \put (220,188){\vector(-1,0){60}}\put (240,178){$\scriptstyle\rho$}
 \put (105,40){Figure 1. $D_l$}
\end{picture}
\end{floatingfigure}

We recall that the spin groups are classically defined via Clifford algebras (see, e.\,g., \cite[Section 4.8]{89Jac}). However, we will use their identification with universal Chevalley groups, which is more suitable for our purposes, see \cite{72Car,85Car,98GorLySol}.

As above, let $\mathbb{G}$ be a simply-connected simple linear algebraic group of type $D_l$ over an algebraically closed field $F$ of positive characteristic $p$. We will assume that $l\geqslant2$.  Denote by $\Phi$ the root system of type $D_l$ with the set $\Pi=\{\alpha_1,\ldots,\alpha_l\}$ of fundamental roots and Dynkin diagram shown in Fig. 1. We may view $\mathbb{G}$ as a Chevalley group generated by the symbols $x_\alpha(t)$, $t\in F$
and $\alpha \in \Phi$, that satisfy the Chevalley relations. It is known that $\mathbb{G}\cong\operatorname{Spin}_{2l}(F)$.
For details, see \cite[Theorems 1.10.7, 1.12.1]{98GorLySol}

For $q=p^m$, let $\varphi_q$ be the field automorphism of $\mathbb{G}$ that acts on the generators by $\varphi_q:x_\alpha(t)\mapsto x_\alpha(t^q)$, $\alpha\in \Phi$, $t\in F$,  and let $\gamma_\rho$ be the graph automorphism of $\mathbb{G}$ associated with the two-fold symmetry $\rho: \alpha_{l-1} \leftrightarrow \alpha_l$, $\rho: \alpha_i\mapsto \alpha_i$, $i<l-1$, of the Dynkin diagram that acts on the generators by $\gamma_\rho: x_\alpha(t)\mapsto x_{\rho(\alpha)}(t)$, $\alpha\in \pm\Pi$, $t\in F$. We let $\sigma$ be either $\varphi_q$ or $\varphi_q\gamma_\rho$ and then
$G=C_{\mathbb{G}}(\sigma)$ will be isomorphic to either $\operatorname{Spin}_{2l}^+(q)$ or $\operatorname{Spin}_{2l}^-(q)$, respectively. There is a short exact sequence
$$
1\to Z \to \operatorname{Spin} ^\varepsilon_{2l}(q) \to \mathrm{P}\Omega_{2l}^\varepsilon(q)\to 1,
$$
where $\varepsilon=\pm$, and $Z=\mathrm{Z}(\operatorname{Spin} ^\varepsilon_{2l}(q))$ is isomorphic to $\mathbb{Z}_{(2,q-1)}^2$ if $\varepsilon=+$ and $l$ is even, and to $\mathbb{Z}_{(4,q^l-\varepsilon)}$, otherwise.

Let $\mathbb{T}$ be a $\sigma$-invariant maximal torus of $\mathbb{G}$. Explicitly, we may choose $\mathbb{T}$ to be generated by the elements $h_\alpha(t)=n_\alpha(t)n_\alpha(-1)$, where $n_\alpha(t)=x_\alpha(t)x_{-\alpha}(t^{-1})x_\alpha(t)$ for $\alpha\in \Phi$, $t\in F^\times$.  For every  $\alpha\in \Phi$, the root subgroup $\langle x_\alpha(t)\mid t\in F^\times \rangle$ is $\mathbb{T}$-invariant. In particular, $x_\alpha(t)^\tau=x_\alpha(r_\alpha(\tau)t)$ for all $\tau\in \mathbb{T}$, $t\in F^\times$, and a suitable element $r_\alpha$ of the character group $X=\operatorname{Hom}(\mathbb{T},F^\times)$ which we may identify with $\alpha$.
The fundamental roots $\alpha_1,\ldots,\alpha_l$ form an $\mathbb{R}$-basis of $X\otimes \mathbb{R}$.
Similarly, the map $h_\alpha$ lies in the group of cocharacters $Y=\operatorname{Hom}(F^\times,\mathbb{T})$ and may be identified with the coroot $\alpha^\vee$ corresponding to $\alpha$. There is a natural bilinear pairing $X\times Y\to \mathbb{Z}$, $(\chi,\theta)\mapsto \langle\chi,\theta\rangle$ such that $\langle\alpha,\alpha^\vee\rangle=2$ for all $\alpha\in \Phi$.
Since $\mathbb{G}$ is simply-connected, the coroots $\alpha_1^\vee,\ldots,\alpha_l^\vee$ form a $\mathbb{Z}$-basis of $Y$. The dual $\mathbb{Z}$-basis $\omega_1,\ldots,\omega_l$ of $X$, i.\,e. such that $\langle\omega_i,\alpha_j^\vee\rangle=\delta_{ij}$, $i,j=1,\ldots,l$, is called the set of {\em fundamental weights}.

The Weyl group $W=N_\mathbb{G}(\mathbb{T})/\mathbb{T}$ acts naturally on $\mathbb{T}$.
Since $\mathbb{T}$ is $\sigma$-invariant, there is an induced action of $\sigma$ on $W$ and a (left) action of both $\sigma$ and $W$
on $X$ by $^u\chi(\tau)=\chi(\tau^u)$,  for all $\chi\in X$, $\tau\in\mathbb{T}$, $u\in W\leftthreetimes\langle\sigma\rangle$.
In the basis $\alpha_1,\ldots,\alpha_l$, the matrix of $\sigma$ is either $\operatorname{diag}(q,\ldots,q)$ if $\sigma=\varphi_q$ or
$M$ defined in (\ref{mats1}) if $\sigma=\varphi_q\gamma_\rho$.

Two elements $w,w'\in W$ are {\em $\sigma$-conjugate}, if there exists $x\in W$ such that $w'=x^{-1}wx^\sigma$.
This is an equivalence relation on $W$ whose classes are called the {\em $\sigma$-conjugacy classes}.
The map $w\mapsto w\sigma^{-1}$ induces an element-wise bijection between the $\sigma$-conjugacy classes of $W$
and ordinary $W$-conjugacy classes in the coset $W\sigma^{-1}$.

Let $\pi: N_\mathbb{G}(\mathbb{T})\to W$ be the natural epimorphism. If $g\in G$ is such that $^g\mathbb{T} = g\mathbb{T} g^{-1}$ is $\sigma$-invariant then $g^{-1}g^\sigma\in N_\mathbb{G}(\mathbb{T})$.

\begin{lem}\cite[Propositions 3.2.3, 3.3.3, 3.3.4]{72Car} \label{twt}
\begin{enumerate}
\item[$(i)$] The map $^g\mathbb{T}\to \pi(g^{-1}g^\sigma)$ determines a bijection between the $G$-conjugacy classes of $\sigma$-invariant maximal tori of $\mathbb{G}$ and the $\sigma$-conjugacy classes of $W$.
\item[$(ii)$] If $^g\mathbb{T}$ is $\sigma$-invariant then $C_{^g\mathbb{T}}(\sigma)$ is isomorphic to $X/X^{\sigma w^{-1}-1}$, where $w=\pi(g^{-1}g^\sigma)$.
\end{enumerate}
\end{lem}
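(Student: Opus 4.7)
The plan is to prove parts (i) and (ii) separately, the first via the Lang--Steinberg theorem for the parametrisation, the second via a character pairing computation for the structure of the centraliser.

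For part (i), every $\sigma$-invariant maximal torus of $\mathbb{G}$ is $\mathbb{G}$-conjugate to the fixed $\mathbb{T}$, so I write $\mathbb{T}' = {}^g\mathbb{T}$ for some $g\in\mathbb{G}$. The $\sigma$-invariance $({}^g\mathbb{T})^\sigma = {}^g\mathbb{T}$ is equivalent to $g^{-1}g^\sigma \in N_\mathbb{G}(\mathbb{T})$, which permits the definition $\Theta(\mathbb{T}') := \pi(g^{-1}g^\sigma)\in W$. Two verifications are needed. First, replacing $g$ by $gn$ with $n\in N_\mathbb{G}(\mathbb{T})$ (which preserves ${}^g\mathbb{T}$) sends $\pi(g^{-1}g^\sigma)$ to $\pi(n)^{-1}\pi(g^{-1}g^\sigma)\pi(n)^\sigma$, i.e.\ to a $\sigma$-conjugate; second, replacing $g$ by $hg$ with $h\in G$ leaves $g^{-1}g^\sigma$ unchanged since $h^\sigma=h$. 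Thus $\Theta$ descends to a well-defined map from $G$-conjugacy classes of $\sigma$-invariant maximal tori to $\sigma$-conjugacy classes of $W$. Surjectivity follows by fixing a preimage $\dot w\in N_\mathbb{G}(\mathbb{T})$ of $w\in W$ and applying Lang--Steinberg on $\mathbb{G}$ to solve $g^{-1}g^\sigma=\dot w$; injectivity is handled analogously, reducing the identification of two candidate tori to solving $h^{-1}h^\sigma=t$ on the connected torus $\mathbb{T}$, again by Lang--Steinberg.

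For part (ii), the element ${}^gt$ with $t\in\mathbb{T}$ lies in $C_{{}^g\mathbb{T}}(\sigma)$ iff $({}^gt)^\sigma={}^gt$, which, writing $n=g^{-1}g^\sigma$, becomes $n\,t^\sigma n^{-1}=t$. Since $n$ and $\pi(n)=w$ induce identical actions on $\mathbb{T}$, this reads $t^\sigma=t^w$. Pairing against $X$ via $\chi(\tau^u)=({}^u\chi)(\tau)$, the condition translates to $({}^\sigma\chi-{}^w\chi)(t)=0$ for every $\chi\in X$, that is, $\chi(t)=1$ for every $\chi$ in the subgroup $(\sigma-w)X = X^{\sigma w^{-1}-1}$ (the last identity uses invertibility of $w$ on $X$, via the change of variables $\chi\mapsto w^{-1}\chi$). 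Hence $C_{{}^g\mathbb{T}}(\sigma)\cong\operatorname{Hom}(X/X^{\sigma w^{-1}-1},F^\times)$, and since the matrix of $\sigma w^{-1}-1$ on $X\otimes\mathbb{Q}$ has nonzero determinant (a polynomial in $q$ whose leading behaviour does not vanish), the quotient is a finite abelian group of order coprime to $p$, so Pontryagin duality into $F^\times$ yields an abstract isomorphism with $X/X^{\sigma w^{-1}-1}$ itself. The chief obstacle is the consistent bookkeeping of the mixed left/right actions of $W\rtimes\langle\sigma\rangle$ on $X$ and on $\mathbb{T}$ through the canonical pairing, in particular the passage between the raw image $(w-\sigma)X$ and the form $X^{\sigma w^{-1}-1}$ preferred in the statement; once conventions are pinned down, both parts reduce cleanly to Lang--Steinberg together with the duality $\mathbb{T}\cong\operatorname{Hom}(X,F^\times)$.
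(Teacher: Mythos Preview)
The paper does not give its own proof of this lemma; it merely quotes it from Carter's book (Propositions~3.2.3, 3.3.3, 3.3.4 in \cite{72Car}), so there is nothing in the paper to compare against. Your outline is a correct sketch of the standard argument found in that reference: Lang--Steinberg on $\mathbb{G}$ (and on $\mathbb{T}$ for injectivity) handles part~$(i)$, and the canonical identification $\mathbb{T}\cong\operatorname{Hom}(X,F^\times)$ together with the finiteness and $p'$-order of the cokernel of $\sigma w^{-1}-1$ handles part~$(ii)$.

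Two small remarks. First, in the paper's notation $X^{\sigma w^{-1}-1}$ denotes the \emph{image} $(\sigma w^{-1}-1)X$, not a fixed-point set; you have read this correctly, but it is worth stating explicitly. Second, your injectivity step in~$(i)$ is compressed: after arranging $\pi(g_1^{-1}g_1^\sigma)=\pi(g_2^{-1}g_2^\sigma)=w$, one seeks $m\in\mathbb{T}$ with $(g_2 m g_1^{-1})^\sigma=g_2 m g_1^{-1}$, which unwinds to a Lang--Steinberg problem on $\mathbb{T}$ for the twisted Frobenius $t\mapsto \dot w\, t^\sigma\,\dot w^{-1}$; this is indeed a Steinberg endomorphism of the connected group $\mathbb{T}$, so surjectivity of the Lang map applies. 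Likewise, the claim that the quotient has order prime to $p$ follows because $\sigma$ acts on $X$ as $q$ times a finite-order automorphism, so modulo $p$ the matrix of $\sigma w^{-1}-1$ is $-E$ and its determinant is a unit.
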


The Euclidean space $X\otimes \mathbb{R}$ has an orthonormal basis $\nu_1,\ldots \nu_l$ such that
\begin{equation}\label{trans}
\alpha_i=\nu_i-\nu_{i+1},\quad i=1,\ldots,l-1, \quad \alpha_l=\nu_{l-1}+\nu_l.
\end{equation}
The Weyl group $W$ is usually identified with the group generated by the reflections $w_{\alpha_i}$ of $X\otimes \mathbb{R}$ in the fundamental roots.
Written in the basis $\nu_1,\ldots \nu_l$, the group $W$ is faithfully represented by monomial $l\times l$-matrices
with nonzero entries $\pm 1$. Such matrices are in bijection with the {\em signed permutations}, i.\,e. elements $\theta$ of the group $Sym(\{\pm 1,\ldots,\pm l\})$
satisfying $\theta(-i)=-\theta(i)$, $i=1,\ldots,l$, which is isomorphic to the Weyl group $W(C_l)$.
Every signed permutation $\theta$ is a unique product of positive and negative cycles of the form $(i_1,\ldots,i_k)$ and $\overline{(i_1,\ldots,i_k)}$ which represent the permutations
\begin{equation}\label{sp}
i_1\to \ldots \to i_k\to i_1, \qquad i_1\to \ldots \to i_k\to -i_1,
\end{equation}
respectively. A a signed partition $l=l'+l''$ with positive part $l'=l_1+\ldots+l_r$ and negative part $l''=l_{r+1}+\ldots+l_{r+s}$ determines a {\em signed-cycle type} $[l_1,\ldots,l_r,\overline l_{r+1},\ldots,\overline l_{r+s}]$ of $\theta$ if the signed-cycle decomposition of $\theta$ contains positive cycles of lengths $l_i$, $1\leqslant i\leqslant s$, and negative cycles of lengths $l_i$, $r+1 \leqslant i \leqslant r+s$.

The Weyl group $W=W(D_l)$ is a subgroup of $W(C_l)$ of index $2$ consisting of the elements with an even number of negative cycles.

\begin{lem}\cite[Propositions 24,25]{72CarC}\label{conjw}
Two element of $W(C_l)$ are conjugate if and only if they have the same signed-cycle type. A conjugacy class of $W(C_l)$ that lies in $W$ remains a complete $W$-conjugacy class, except when
it consists of elements with all cycles positive of even length, in which case it splits in two $W$-classes.
\end{lem}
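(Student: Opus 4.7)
The plan is to prove both claims by direct manipulation of signed cycles. For the characterization of $W(C_l)$-conjugacy, I would first check that signed-cycle type is preserved under conjugation: if $\pi\in W(C_l)$, then from (\ref{sp}) a positive (respectively negative) cycle $(i_1,\ldots,i_k)$ of $\theta$ is carried by $\pi\theta\pi^{-1}$ to the positive (respectively negative) cycle on $\pi(i_1),\ldots,\pi(i_k)$, using the signed-permutation relation $\pi(-j)=-\pi(j)$. Conversely, two signed permutations of the same signed-cycle type are conjugated by the evident signed permutation matching their cycles pairwise and sending entries in order.

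For the splitting claim I would use the standard index-two principle: for $H\le G$ of index $2$ and $w\in H$, the $G$-class of $w$ splits into two $H$-classes iff $C_G(w)\subseteq H$, and otherwise remains a single $H$-class. Take $G=W(C_l)$ and $H=W(D_l)$, and observe that the parity of the number of negative cycles is a homomorphism $W(C_l)\to\mathbb{Z}_2$ with kernel $W(D_l)$ (its value is the determinant in the signed-permutation representation). The question then becomes whether every element commuting with $w$ has an even number of negative cycles. If $w$ has a negative cycle $\overline c$, then $\overline c$ extended by the identity off its support lies in $C_{W(C_l)}(w)$ but has a single negative cycle; if $w$ has only positive cycles and one of them, $c$, has odd length $k$, then the sign change on the support of $c$ commutes with $c$ and contributes $k$ (odd) negative $1$-cycles. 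In both cases the centralizer escapes $W(D_l)$, so the class does not split.

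In the remaining case, where every cycle of $w$ is positive of even length, I would identify the centralizer explicitly as $\prod_{k}\bigl((\mathbb{Z}_k\times\mathbb{Z}_2)\wr S_{m_k}\bigr)$, with $m_k$ the number of positive $k$-cycles of $w$, the $\mathbb{Z}_k$ generated by a cycle, the $\mathbb{Z}_2$ by the sign change on its support, and $S_{m_k}$ permuting cycles of equal length via the obvious swaps. It then suffices to check that each generator lies in $W(D_l)$: powers of a positive cycle contribute no negative cycles, a sign change on a cycle of even length $k$ contributes $k$ negative $1$-cycles, and a swap of two equal-length positive cycles is a product of positive $2$-cycles. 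By the homomorphism property the entire centralizer lies in $W(D_l)$ and the class splits, completing the stated dichotomy. The main technical step I anticipate is the explicit wreath-product description of the centralizer—in particular, verifying that every element commuting with $w$ is built from these generators—after which the parity calculation is routine.
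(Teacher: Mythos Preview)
The paper does not prove this lemma; it is quoted directly from Carter \cite[Propositions 24, 25]{72CarC} without argument. So there is no ``paper's own proof'' to compare against, and your proposal is effectively supplying what the paper outsources to the reference. As such, the overall strategy---conjugation preserves signed-cycle type, the index-two splitting criterion, and a case analysis on the centralizer---is exactly the standard route that Carter's proof follows.

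Your argument is sound in outline, but one parenthetical remark is incorrect: the homomorphism $W(C_l)\to\mathbb{Z}_2$ whose kernel is $W(D_l)$ is the product of the diagonal signs (equivalently, the parity of the number of negative cycles), \emph{not} the determinant of the signed-permutation matrix. A positive $k$-cycle has determinant $(-1)^{k-1}$ and a negative $k$-cycle has determinant $(-1)^k$, so the determinant mixes in the sign of the underlying permutation. This slip is harmless for the proof, since everywhere you actually use the homomorphism you compute with the number of negative cycles, which is the right invariant.

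The one genuinely nontrivial step you flag---the wreath-product description $C_{W(C_l)}(w)\cong\prod_k\bigl((\mathbb{Z}_k\times\mathbb{Z}_2)\wr S_{m_k}\bigr)$ for $w$ a product of positive cycles---does require an argument. The order is forced by counting the conjugacy class, and you have exhibited a subgroup of that order, so equality follows; alternatively one can argue directly that any signed permutation commuting with $w$ must permute the $w$-orbits and, on each orbit, act as a power of the cycle times a global sign. Once that is in hand, your parity check on the generators is correct and the dichotomy follows.
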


As a standard representative of the $W(C_l)$-class of the signed-cycle type $[l_1,\ldots,\allowbreak\overline l_{r+s}]$, we choose the permutation
$$
(1,\ldots,l_1)(l_1+1,\ldots,l_1+l_2)\ldots \overline{(l-l_{r+s},\ldots,l)}
$$
If all $l_i$ are even and $l''=0$ then the $W(C_l)$-class of type $[l_1,\ldots, l_r]$ with standard representative $\theta=(1,\ldots,l_1)\ldots(\ldots,l)$ splits in two $W$-classes of types $[l_1,\ldots, l_r]^\pm$ with standard representatives $\theta^+=\theta$ and $\theta^-=(1,\ldots,l_1)\ldots(\ldots,l-1,-l)$. In fact, $\theta^+$ and $\theta^-$ are conjugate by the involution $\overline{(l)}\in W(C_l)\setminus W$.

The above suggests a natural way of identifying the matrices of $\sigma w^{-1}$ in the $\mathbb{Z}$-basis $\omega_1,\ldots,\allowbreak\omega_l$ of $X$, where $w$ runs through the representatives of $\sigma$-conjugacy classes of $W$,  with the matrices of standard representatives of signed permutation. (Indeed, by (\ref{trans}) and (\ref{mats1}), the $\sigma$-conjugacy of $W$ is the ordinary conjugacy if $\sigma=\varphi_q$ and the $\overline{(l)}$-conjugacy if $\sigma=\varphi_q\gamma_\rho$, and by \cite[p. 45]{72CarC}, every element of a Weyl group is conjugate to its inverse.)

We will need some number-theoretic facts. For a natural number $n$ denote by $n_2$ the $2$-part of $n$, i.\,e. the largest power of $2$ dividing $n$. The GCD of $a$ and $b$ is $(a,b)$.

\begin{lem}\label{a2}
For $a,b$ natural numbers, if $a_2\geqslant b_2$ then $(2a,b)=(a,b)$.
\end{lem}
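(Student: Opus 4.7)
The plan is to compare the two greatest common divisors prime by prime. Let $d=(a,b)$ and $e=(2a,b)$. Since $a\mid 2a$ and $b\mid b$, we have $d\mid e$, so $e/d$ is a positive integer; the goal is to show it equals $1$ under the hypothesis $a_2\geqslant b_2$.

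First I would establish the easy upper bound $e\mid 2d$. Indeed, $e$ divides $2a$ and $e$ divides $2b$, so $e\mid (2a,2b)=2d$. Consequently $e/d\in\{1,2\}$, and the entire lemma reduces to ruling out the value $2$, which is the same as checking that the $2$-part of $e$ equals the $2$-part of $d$; at every odd prime the two GCDs visibly have the same contribution since the factor of $2$ is irrelevant.

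For the $2$-part, write $a=2^{a_2}a'$ and $b=2^{b_2}b'$ with $a',b'$ odd. Then $d_2=\min(a_2,b_2)$ and $e_2=\min(a_2+1,b_2)$ by the standard formula for the $2$-part of a GCD. The assumption $a_2\geqslant b_2$ gives $\min(a_2,b_2)=b_2$ and, since $a_2+1>b_2$ as well, $\min(a_2+1,b_2)=b_2$. Hence $d_2=e_2$, so $e/d$ is odd, forcing $e=d$.

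The proof is essentially mechanical; the only step that requires any care is noticing that both mins collapse to $b_2$ exactly because the hypothesis $a_2\geqslant b_2$ is preserved after bumping $a_2$ to $a_2+1$. No deeper obstacle is expected.
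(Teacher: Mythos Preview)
Your proof is correct and follows essentially the same approach as the paper's: both arguments verify that $(a,b)_2=(2a,b)_2$ by observing that each equals $b_2$ under the hypothesis $a_2\geqslant b_2$, and your sandwich $d\mid e\mid 2d$ just makes explicit why equality of $2$-parts finishes the job. One minor notational slip: the paper defines $n_2$ as the largest \emph{power} of $2$ dividing $n$ (so $a=a_2\cdot a'$ with $a'$ odd and $(2a)_2=2a_2$), whereas you write $a=2^{a_2}a'$ and $\min(a_2+1,b_2)$ as if $a_2$ were the $2$-adic exponent; the argument is unaffected once the notation is aligned.
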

\begin{proof} We have $(a,b)_2=\operatorname{min}\{a_2,b_2\}=b_2=\operatorname{min}\{2a_2,b_2\}=(2a,b)_2$, since $a_2\geqslant b_2$. The claim follows.
\end{proof}

\begin{lem}\label{nt}
Suppose $a$ is an odd natural number.

\begin{enumerate}
\item[$(i)$] For every natural $n$,
$$
\begin{array}{r@{\,}l}
(a^n-1)_2=&\left\{
\begin{array}{ll}
n_2(a+1)_2, &\text{if}\ \ n\ \ \text{is even and}\ \ a\equiv -1\pmod{4};\\
n_2(a-1)_2, &\text{otherwise};
\end{array}
\right.\\[10pt]
(a^n+1)_2=&\left\{
\begin{array}{ll}
(a+1)_2,\quad &\text{if}\ \ n\ \ \text{is odd};\\
2, &\text{if}\ \ n\ \ \text{is even}.
\end{array}
\right.
\end{array}
$$

\item[$(ii)$] If $n_1,n_2$ are odd and $\varepsilon=\pm1$ then $(a^{n_1}-\varepsilon,a^{n_2}+\varepsilon)=2$.

\item[$(iii)$] If $n_1$ is even and $n_2$ is odd then $(a^{n_1}+1,a^{n_2}\pm1)=2$.

\item[$(iv)$] If $n_1,n_2$ are odd and $\varepsilon=\pm1$ then $(a^{n_1}+\varepsilon,a^{n_2}+\varepsilon)=a^{(n_1,n_2)}+\varepsilon$.

\item[$(v)$] If $n_1$ is even, $n_2$ is odd, and $\varepsilon=\pm1$ then $(a^{n_1}-1,a^{n_2}+\varepsilon)=a^{(n_1,n_2)}+\varepsilon$.

\item[$(vi)$] If $n_1,n_2$ are odd, $n_3$ is even, and $a\equiv\varepsilon\pmod 4$, where $\varepsilon=\pm 1$, then
$$
\mathbb{Z}_{(a^{n_1}+\varepsilon)(a^{n_2}-\varepsilon)}\times \mathbb{Z}_{a^{n_3}+1} \cong \mathbb{Z}_{a^{n_1}+\varepsilon}\times \mathbb{Z}_{(a^{n_2}-\varepsilon)(a^{n_3}+1)}
$$

\end{enumerate}
\end{lem}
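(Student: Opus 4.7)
The plan is to prove (i)--(vi) in order, since (i) supplies the $2$-adic bookkeeping used throughout and (ii)--(iii) feed directly into (vi).

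For (i), when $n$ is odd the factorization $a^n-1=(a-1)(a^{n-1}+a^{n-2}+\cdots+1)$ exhibits a cofactor that is a sum of $n$ odd summands, hence odd, so $(a^n-1)_2=(a-1)_2$. When $n$ is even, iterating $a^n-1=(a^{n/2}-1)(a^{n/2}+1)$ reduces to the odd case, and tracking the $2$-parts through each step together with the observation that exactly one of $(a-1)_2$, $(a+1)_2$ equals $2$ yields the stated formula. The companion identity for $a^n+1$ is obtained analogously: for odd $n$, factor out $a+1$ with an odd alternating cofactor; for even $n$, note that $a$ odd gives $a^n\equiv 1\pmod 8$, so $(a^n+1)_2=2$.

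Parts (ii) and (iii) then follow uniformly from the classical identity $(a^m-1,a^n-1)=a^{(m,n)}-1$: since $a^{n_i}\pm\varepsilon$ divides $a^{2n_i}-1$, the gcd in question divides $a^{2d}-1=(a^d-1)(a^d+1)$ with $d=(n_1,n_2)$, and the parity hypotheses force the residue of $a^{n_i}\pm\varepsilon$ modulo one of $a^d\mp1$ to be $\pm2$, trapping the gcd at $2$. In (iv) and (v), the easy direction is that $a^d+\varepsilon$ divides both factors under the parity assumptions. For the reverse, the gcd $g$ again divides $a^{2d}-1$, so one may write $g=(a^d+\varepsilon)h$ with $h$ dividing the complementary factor; the quotients $(a^{n_i}\pm\varepsilon)/(a^d+\varepsilon)$ are geometric-type sums congruent to $1$ modulo that complementary factor (by a parity argument on the number of terms), which forces $h=1$.

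For (vi), I use the structure-theorem identity $\mathbb{Z}_m\times\mathbb{Z}_n\cong\mathbb{Z}_{(m,n)}\times\mathbb{Z}_{\mathrm{lcm}(m,n)}$. Since both sides of the claimed isomorphism have the same order, it suffices to verify
\[
\bigl((a^{n_1}+\varepsilon)(a^{n_2}-\varepsilon),\,a^{n_3}+1\bigr)=\bigl(a^{n_1}+\varepsilon,\,(a^{n_2}-\varepsilon)(a^{n_3}+1)\bigr).
\]
By (ii) and (iii), the pairwise gcds among $a^{n_1}+\varepsilon$, $a^{n_2}-\varepsilon$ and $a^{n_3}+1$ all have odd part $1$, so only the $2$-parts matter. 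The hypothesis $a\equiv\varepsilon\pmod 4$ now enters through (i): it forces $(a^{n_1}+\varepsilon)_2=2$ and $(a^{n_3}+1)_2=2$ while $(a^{n_2}-\varepsilon)_2\geqslant 4$, and a short calculation then shows both displayed gcds equal $2$.

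The main obstacle is the $2$-adic bookkeeping in (i) and its careful use in (vi): one must track which of $a\pm1$ absorbs the large $2$-part, and then verify that the hypothesis $a\equiv\varepsilon\pmod 4$ makes the two asymmetric gcds $(MN,K)$ and $(M,NK)$, with $M=a^{n_1}+\varepsilon$, $N=a^{n_2}-\varepsilon$, $K=a^{n_3}+1$, land on the same value $2$ rather than differing by a factor of $2$.
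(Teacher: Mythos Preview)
Your argument is correct and, where it overlaps with the paper, follows the same line. The paper itself dispatches (i)--(v) entirely by citation and only proves (vi); there the reasoning is exactly yours: (ii) and (iii) show that the odd parts of $a^{n_1}+\varepsilon$, $a^{n_2}-\varepsilon$, $a^{n_3}+1$ are pairwise coprime, and (i) together with $a\equiv\varepsilon\pmod 4$ gives $(a^{n_1}+\varepsilon)_2=(a^{n_3}+1)_2=2$, which is all that is needed. Your packaging via the single gcd identity $\bigl((a^{n_1}+\varepsilon)(a^{n_2}-\varepsilon),\,a^{n_3}+1\bigr)=\bigl(a^{n_1}+\varepsilon,\,(a^{n_2}-\varepsilon)(a^{n_3}+1)\bigr)$ is a tidy way to record this, and your direct proofs of (i)--(v) simply supply what the paper outsources to the literature. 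One minor remark: in (vi) the inequality $(a^{n_2}-\varepsilon)_2\geqslant 4$ that you note is true but not actually needed---the equalities $(a^{n_1}+\varepsilon)_2=(a^{n_3}+1)_2=2$ alone already force both gcds to have $2$-part equal to $2$.
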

\begin{proof} $(i)$ See \cite[Lemma 8]{08Gr.t}. $(ii)$--$(v)$ See \cite[Lemma 6]{04Zav}.

$(vi)$ By $(ii)$,$(iii)$ the $2'$-parts of the numbers $a^{n_1}+\varepsilon$, $a^{n_2}-\varepsilon$, $a^{n_3}+1$ are pairwise coprime.
Since, by assumption, $(a^{n_1}+\varepsilon)_2=(a^{n_3}+1)_2=2$, the claim follows.
\end{proof}

In all matrices below, the dot entries stand for zeros.

\begin{lem} \label{mt} Let $a,b,c$ be integers.
\begin{enumerate}
\item[$(i)$] If $(a,b)=1$ and
$$
A=
\left(
  \begin{array}{cc}
    a & . \\
    . & b
  \end{array}
\right),
\qquad
B=
\left(
  \begin{array}{cc}
    1 & . \\
    . & ab
  \end{array}
\right),
$$
then set
\begin{equation} \label{pq1}
P=\left(
  \begin{array}{cc}
    1 & n \\
    -b & am
  \end{array}
\right),
\qquad
Q=\left(
  \begin{array}{cc}
    m & -bn \\
    1 & a
  \end{array}
\right),
\end{equation}
where $m,n$ are such that $am+bn=1$.

\item[$(ii)$] If
$$
A=
\left(
  \begin{array}{cc}
    a & 1 \\
    . & b
  \end{array}
\right),
\qquad
B=
\left(
  \begin{array}{cc}
    1 & . \\
    . & ab
  \end{array}
\right),
$$
then set
\begin{equation}\label{pq2}
P=\left(
  \begin{array}{cc}
    1 & . \\
    -b & 1
  \end{array}
\right),
\qquad
Q=\left(
  \begin{array}{cc}
    . & -1 \\
    1 & a
  \end{array}
\right).
\end{equation}

\item[$(iii)$] If $(a,b)$ divides $c$ and
$$
A=
\left(
  \begin{array}{cc}
    a & c \\
    . & b
  \end{array}
\right),
\qquad
B=
\left(
  \begin{array}{cc}
    a & . \\
    . & b
  \end{array}
\right),
$$
then set
\begin{equation}\label{pq3}
P=\left(
  \begin{array}{cc}
    1 & -nz \\
    . & 1
  \end{array}
\right),
\quad
Q=\left(
  \begin{array}{cc}
    1 & -mz \\
    . & 1
  \end{array}
\right),
\end{equation}
where $x=a/(a,b)$, $y=b/(a,b)$, $z=c/(a,b)$ and $m,n$ satisfy $xm+yn=1$.

\item[$(iv)$] If $(a,b)=1$, $c$ is odd, and
$$
A=
\left(
  \begin{array}{cc}
    2a & c \\
    . & 2b
  \end{array}
\right),
\qquad
B=
\left(
  \begin{array}{cc}
    1 & . \\
    . & 4ab
  \end{array}
\right),
$$
then set
\begin{equation}\label{pq4}
P=\left(
  \begin{array}{cc}
    -1 & n(c-1)/2 \\
    -2b & 1+nb(c-1)
  \end{array}
\right),
\quad
Q=\left(
  \begin{array}{cc}
    m(c-1)/2 & -1-ma(c-1) \\
    -1 & 2a
  \end{array}
\right),
\end{equation}
where $m,n$ are such that $am+bn=1$.
\end{enumerate}

Then, in all four cases above, we have $P,Q\in \operatorname{GL}_2(\mathbb{Z})$ and $PAQ=B$.
\end{lem}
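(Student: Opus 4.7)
The statement is a collection of four explicit $2\times 2$ matrix identities over $\mathbb{Z}$, so my plan is to verify each one by direct computation. For each part I would proceed in three stages: (a) check that $P$ and $Q$ have integer entries; (b) compute $\det P$ and $\det Q$ and confirm they lie in $\{\pm1\}$; (c) multiply out $PAQ$ and check it equals $B$.

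Stages (a) and (b) are short. Integrality is automatic in (i) and (ii), follows from the hypothesis $(a,b)\mid c$ (so that $z=c/(a,b)\in\mathbb{Z}$) in (iii), and requires $c$ odd (so that $(c-1)/2\in\mathbb{Z}$) in (iv). For the determinants, in (i) both expand at once to $am+bn=1$; in (ii) they are equal to $1$ by inspection; in (iii) both $P$ and $Q$ are upper-unitriangular; in (iv) each $2\times 2$ determinant collapses to $-1$ after the symmetric terms carrying $c-1$ cancel. Hence $P,Q\in \operatorname{GL}_2(\mathbb{Z})$ in all four cases.

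Stage (c) is the substantive part. Cases (i) and (iv) rely on the Bezout relation $am+bn=1$: it clears each of the three non-diagonal entries of $PAQ$ and reduces the $(2,2)$-entry to $ab$ (respectively $4ab$). Case (ii) factors as the two elementary operations $AQ=\bigl(\begin{smallmatrix}1&0\\b&ab\end{smallmatrix}\bigr)$ followed by the row operation $P$. In case (iii), the diagonal entries of $PAQ$ are preserved since $P,Q$ are unitriangular, while the $(1,2)$-entry equals $c-z(am+bn)=c-z(a,b)=0$, because $xm+yn=1$ yields $am+bn=(a,b)$ after multiplication by $(a,b)$. The main obstacle is purely bookkeeping in case (iv): the entries of $P$ and $Q$ combine three interacting quantities $ma$, $nb$, and $(c-1)/2$, and the cross-terms must be grouped so that the factor $(am+bn)(c-1)=c-1$ appears and cancels the isolated $\pm c$'s. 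No deeper idea is required; one should note, however, that the target $B$ in case (iii) is not the Smith normal form of $A$, so an explicit unimodular pair $(P,Q)$ must genuinely be produced rather than being extracted from Smith normal form uniqueness.
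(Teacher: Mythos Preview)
Your proposal is correct and is precisely the approach the paper takes: the paper's proof consists of the single word ``Straightforward,'' i.e., direct verification of integrality, unimodularity, and the identity $PAQ=B$ in each case. Your outline of stages (a)--(c), including the use of the B\'ezout relation $am+bn=1$ in cases (i), (iii), (iv) and the two-step elementary factorization in (ii), is exactly what that verification amounts to.
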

\begin{proof} Straightforward.
\end{proof}

For a matrix $M$, we denote by $M^\top$ the transpose of $M$.
The following matrices are referred to from elsewhere in the paper.

\begin{equation}\label{mats1}
M=
\left(
\begin{array}{ccccc}
q \\
 &\ddots\\
 &       &  q\\
 &       &  & . &  q \\
 &       &  & q & .
\end{array}
\right)
\qquad
S=
\left(
\begin{array}{rrrrr}
 1 & -1 & . \\
 . & 1 & -1 \\
 &  & \ddots  &\ddots \\
 &  &    & 1  &  -1 \\
 &  &    & 1  &  1 \\
\end{array}
\right)_{l\times l}
\end{equation}

\begin{equation}\label{mats2}
R_{\varepsilon k}=
\left(
\begin{array}{ccccc}
 . & 1 & . \\
 . & . & 1\\
 &  & & \ddots \\

 .  & . & .    &  &  1 \\
 \varepsilon & . & .   &  &  . \\
\end{array}
\right)_{k\times k}
\quad
B_i=
\left(
\begin{array}{cc@{}c@{}c@{\ \ }c}
 \varepsilon_{-1} & . &  & . & -(1+\varepsilon_{-1})/2\\
 \varepsilon_{-1} & . &  & . & -(1+\varepsilon_{-1})/2\\
   & & \ldots \\
 \varepsilon_{-1} & . &  & . & -(1+\varepsilon_{-1})/2\\
 \varepsilon_{-1} & . &  & . & -(\varepsilon_i+\varepsilon_{-1})/2\\
\end{array}
\right)_{l_i\times l_{-1}}
\end{equation}

\begin{equation}\label{mats3}
J=
\left(
\begin{array}{ccccc}
 . & . &  & . & 1\\
 . & . &  & . & 1\\
   & & \ldots \\
 . & . &  & . & 1\\
\end{array}
\right)_{l\times l}
\quad
P_{\varepsilon k}=
\left(
\begin{array}{llllll}
 1 & q & q^2& & q^{k-2} & . \\
 . & 1 & q  & & q^{k-3} & . \\
 . & . & 1  & & q^{k-4} & . \\
   &   &    & \ddots \\

 . & . & .  &  & 1 & . \\
 q &q^2& q^3& & q^{k-1} & \varepsilon \\
\end{array}
\right)_{k\times k}
\end{equation}

Note that the following degenerate cases may occur
\begin{align*}
&R_{\varepsilon 1}=P_{\varepsilon 1}=(\varepsilon),\\
&B_i=(-(1-\varepsilon_{-1})/2,\ldots, -(1-\varepsilon_{-1})/2,-(\varepsilon_i-\varepsilon_{-1})/2 )^\top,\ \ \text{if}\ \ l_{-1}=1,\\
&B_i=( \varepsilon_{-1}, 0,\ldots, 0, -(\varepsilon_i+\varepsilon_{-1})/2),\ \ \text{if}\ \ l_{i}=1, \\
&B_i=( -(\varepsilon_i-\varepsilon_{-1})/2),\ \ \text{if}\ \ l_{-1}=l_{i}=1.
\end{align*}

\section{Proof of main theorem}

We fix a signed-cycle type $[l_1,\ldots,\overline l_{r+s}]$ and set $\varepsilon_i=1$ if $1\leqslant i\leqslant r$ and $\varepsilon_i=-1$ if $r+1\leqslant i\leqslant r+s$. In the basis $\nu_1,\ldots,\nu_l$, the matrix of
the corresponding standard representative $\theta$ is $R=\bigoplus R_{\varepsilon_i l_i}$, where
$R_{\varepsilon k}$ is the matrix (\ref{mats2}) of the signed cycles (\ref{sp}) for $\varepsilon=\pm1$, respectively.
(In the exceptional case $\theta=\theta^-$, the matrix is given in Lemma \ref{ex}.)
Our aim is to diagonalize the integer matrix
\begin{equation}\label{orm}
qSRS^{-1}-E,
\end{equation}
corresponding  to the transformation $\sigma w^{-1}-1$,
where $E$ is the identity matrix and $S$ is the transition matrix (\ref{mats1}) to the basis $\omega_1,\ldots,\omega_l$, which by Lemma \ref{twt} will give us the cyclic structure of the maximal torus of $\operatorname{Spin}_{2l}^\pm(q)$ that corresponds to the chosen signed-cycle type. We first dispose of the exceptional case.

\begin{lem} \label{ex}
If $s=0$ and all $l_i$ are even then the maximal tori of $\operatorname{Spin}^+_{2l}(q)$ that correspond to the two signed-cycle types $[l_1,\ldots, l_r]^\pm$ are isomorphic.
\end{lem}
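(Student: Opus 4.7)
The plan is to exhibit an integer unimodular matrix $U\in\operatorname{GL}_l(\mathbb{Z})$ with $U^2=E$ satisfying
\begin{equation*}
qSR^-S^{-1}-E \;=\; U\,\bigl(qSR^+S^{-1}-E\bigr)\,U,
\end{equation*}
where $R^\pm$ denotes the matrix of the standard representative $\theta^\pm$ in the $\nu$-basis. By Lemma~\ref{twt}$(ii)$ this forces the two cokernels, and hence the two tori, to be isomorphic as abelian groups.

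The starting point, already recorded just before the statement, is that $\theta^-=\overline{(l)}\,\theta^+\,\overline{(l)}$ in $W(C_l)$. The involution $\overline{(l)}$ acts on $X\otimes\mathbb{R}$ by negating $\nu_l$ and fixing each other $\nu_i$, so in the $\nu$-basis its matrix is $D=\operatorname{diag}(1,\ldots,1,-1)$. Elementary bookkeeping with the convention of (\ref{mats2}) then gives $R^-=DR^+D$, and because $D^2=E$, the displayed identity follows with $U:=SDS^{-1}$ and $U^2=E$.

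The main obstacle is verifying that $U$ really is a well-defined matrix over $\mathbb{Z}$, since $S^{-1}$ has half-integer entries. The plan is to compute the action of $\overline{(l)}$ on two further natural bases of $X\otimes\mathbb{R}$. From~(\ref{trans}) it fixes $\alpha_i$ for $i\leqslant l-2$ and interchanges $\alpha_{l-1}=\nu_{l-1}-\nu_l$ with $\alpha_l=\nu_{l-1}+\nu_l$; since $\overline{(l)}$ is an isometry preserving the root datum of $D_l$, the dual action on fundamental weights fixes $\omega_1,\ldots,\omega_{l-2}$ and swaps $\omega_{l-1}\leftrightarrow\omega_l$. Hence, in the $\omega$-basis underlying the matrix $qSRS^{-1}-E$, the matrix $U$ is precisely the permutation of the transposition $(l-1,\,l)$, which is manifestly in $\operatorname{GL}_l(\mathbb{Z})$. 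A direct computation of $SDS^{-1}$ from the explicit form of $S$ in (\ref{mats1}) for $l=3,4$ confirms the pattern and the general case is routine.

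Once $U\in\operatorname{GL}_l(\mathbb{Z})$ is established, two-sided multiplication by $U$ preserves Smith normal form, so the integer matrices $qSR^+S^{-1}-E$ and $qSR^-S^{-1}-E$ define isomorphic cokernels, and Lemma~\ref{twt}$(ii)$ yields the claimed isomorphism of tori.
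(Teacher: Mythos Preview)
Your argument is correct and follows essentially the same route as the paper: both exploit that $\theta^-=\overline{(l)}\,\theta^+\,\overline{(l)}^{-1}$ and conjugate the matrix $qSR^+S^{-1}-E$ by $U=SDS^{-1}$ (the paper writes $D$ as $R_0$). The paper simply asserts that $SR_0S^{-1}$ is integral and unimodular, whereas you supply the additional detail that $U$ is the permutation matrix of the transposition $(l-1,l)$ in the $\omega$-basis, which is a nice explicit verification.
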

\begin{proof} Since $\theta^+$ and $\theta^-$ are conjugate by $\overline{(l)}$, the corresponding matrices are $qSRS^{-1}-E$
and $qSR_0RR_0^{-1}S^{-1}-E$, where $R_0=\operatorname{diag}(1,\ldots,1,-1)$ is the permutation matrix of $\overline{(l)}$. Since these matrices are conjugate by $SR_0S^{-1}$ which is integral and unimodular, the claim follows.
\end{proof}

Two integer $l\times l$-matrices $M_1$ and $M_2$ are said to be {\em equivalent}, if there are $P,Q\in\operatorname{GL}_l(\mathbb{Z})$ such that $PM_1Q=M_2$. Since the multiplication by $P$ (respectively, $Q$) amounts to performing elementary row (column) transformations of $M_1$, in case $Q=E$ (respectively, $P=E$), we say that $M_1$ and $M_2$ are {\em row-} ({\em column-}) {\em equivalent}.

Since $S$ is row-equivalent to $E+J$, where $J$ is given in (\ref{mats3}), we see that (\ref{orm}) is equivalent to
\begin{equation}\label{orm1}
q(E+J)R(E-\frac{1}{2}J)-E=qR-E+qB,
\end{equation}
where $qR-E$ is block diagonal and $B$ is zero except for the last block-column with entries $B_1,\ldots,B_{r+s}$. ($B_i$ is defined in (\ref{mats2}), where we have abbreviated $\varepsilon_{-i}=\varepsilon_{r+s+1-i}$ and $l_{-i}=l_{r+s+1-i}$ for $i=1,2,\ldots$)
Multiplying (\ref{orm1}) on the left by the block diagonal matrix $\operatorname{diag}(P_{\varepsilon_1 l_1},\ldots, P_{\varepsilon_{-2} l_{-2}}, P'_{\varepsilon_{-1} l_{-1}})$, where $P_{\varepsilon k}$ is defined in (\ref{mats3}) and
$P'_{\varepsilon_{-1} l_{-1}}$ is obtained from $P_{\varepsilon_{-1} l_{-1}}$ by substituting the last row with $(0,\ldots,0,\varepsilon_{-1})$, we arrive at a matrix with some columns having $-1$ on the diagonal and zeros elsewhere.
Using them to annihilate nonzero row entries by column transformations, we obtain (after deleting the identity rows and columns) the matrices

\begin{equation}\label{orm2}
\left(
\begin{array}{rrrrr}
              \multicolumn{3}{c|}{}                    & a_1 &  b_1 \\
               & D &   \multicolumn{1}{c|}{}           & \vdots & \vdots   \\
              \multicolumn{3}{c|}{}            & a_{-2} & b_{-2} \\
\cline{1-3}
       .       &  \ldots      &                   . & a & b \\
       .       &  \ldots      &                   . & 2q & -q-\varepsilon_{-1}
\end{array}
\right)\quad\text{or}\quad
\left(
\begin{array}{rrrr}
     \multicolumn{3}{c|}{}   & a_1+ b_1 \\
               & D      & \multicolumn{1}{c|}{}   & \vdots    \\
     \multicolumn{3}{c|}{}  & a_{-2}+ b_{-2} \\
\cline{1-3}
       .       &  \ldots      &                   . & q-\varepsilon_{-1}
\end{array}
\right)
\end{equation}
according as $l_{-1}>1$ or $l_{-1}=1$, where
\begin{align*}
D &= \operatorname{diag}(q^{l_1}-\varepsilon_1,\ldots,q^{l_{-2}}-\varepsilon_{-2}),\\
a_i &= \varepsilon_{-1}(\varepsilon_i q+q^2+\ldots+q^{l_i}), \quad i=1,2,\ldots\\
b_i &= -((1+\varepsilon_{-1}\varepsilon_i)q+(1+\varepsilon_{-1})(q^2+\ldots+q^{l_i}))/2, \quad i=1,2,\ldots\\
 a  &= -1+\varepsilon_{-1}(q+q^2+\ldots+q^{l_{-1}-1}), \\
 b  &= -(1+\varepsilon_{-1})(q+\ldots+q^{l_{-1}-1})/2-q^{l_{-1}-1}.
\end{align*}

As we have mentioned in the introduction, the theorem holds for $q$ even. Hence, we assume from now on that $q$ is odd.

If $l_{-1}=1$ then, for the second matrix in (\ref{orm2}), we may add the last row multiplied by
$$
\left\{
\begin{array}{ll}
((1-\varepsilon_{-1})(q+q^3+\ldots+q^{l_i-1})+\varepsilon_{-1}(1-\varepsilon_i))/2 &\text{if}\quad l_i \ \ \text{is even}, \\
((1-\varepsilon_{-1})(q^2+q^4+\ldots+q^{l_i-1})+1-\varepsilon_{-1}\varepsilon_i)/2 &\text{if}\quad l_i \ \ \text{is odd},
\end{array}
\right.
$$
to the $i$th row, $i=1,2,\ldots$, to obtain

\begin{equation}\label{orm3}
\left(
\begin{array}{rrrr}
     \multicolumn{3}{c|}{}   & c_1 \\
               & D      & \multicolumn{1}{c|}{}   & \vdots    \\
     \multicolumn{3}{c|}{}  & c_{-2} \\
\cline{1-3}
       .       &  \ldots      &                   . & q-\varepsilon_{-1}
\end{array}
\right),
\end{equation}
where $c_i=(\varepsilon_i-1)/2$ if $l_i$ is even and $(\varepsilon_i-\varepsilon_{-1})/2$ if $l_i$ is odd. We will show below that a subcase of the
general case $l_{-1}>1$ reduces to the same matrix and hence may be treated together with the current case.

Suppose $l_{-1}>1$. The first matrix in (\ref{orm2}) when multiplied on the right by
$$
\left(
\begin{array}{rrrrr}
              \multicolumn{3}{c|}{}                    & (q-1)/2 &  . \\
               & E &   \multicolumn{1}{c|}{}           & \vdots & \vdots   \\
              \multicolumn{3}{c|}{}            & (q-1)/2 & . \\
\cline{1-3}
       .       &  \ldots      &                   . & (q+\varepsilon_{-1})/2 & -1 \\
       .       &  \ldots      &                   . & q & -2
\end{array}
\right)
$$
becomes a matrix whose last row $(0,\ldots,0,2\varepsilon_{-1})$ may be used to bring it by row transformations to the upper triangular form
\begin{equation}\label{orm4}
\left(
\begin{array}{rrrr}
            D &\multicolumn{1}{c|}{}    & (q^{l_1}-\varepsilon_1)/2 &  d_1 \\
              \multicolumn{2}{c|}{}     & \vdots & \vdots   \\
\cline{1-2}
       .  &  \ldots       & (q^{l_{-1}}-\varepsilon_{-1})/2 & d_{-1} \\
       .  &  \ldots       & . & 2
\end{array}
\right),
\end{equation}
where $d_i=0$ if $l_i$ is even and $1$ if $l_i$ is odd.

First, suppose that all $l_i$ are even and all $\varepsilon_i$ equal $1$. Without loss of generality we may assume
that $l_{-1}$ has the smallest $2$-part. If $r+s>1$, we multiply (\ref{orm4}) on the left and right by
$\operatorname{diag}(1,\ldots,1,P,1)$ and $\operatorname{diag}(1,\ldots,1,Q,1)$, respectively, where $P,Q$ are given by (\ref{pq3}) for $a=q^{l_{-2}}-1$, $b=(q^{l_{-1}}-1)/2$, and $c=a/2$. Note that $(a/2)_2\geqslant b_2$ by assumption and Lemma \ref{nt}$(i)$.
Therefore, Lemma \ref{a2} implies $(a,b)=(a/2,b)$ which divides $c$. By Lemma \ref{mt}$(iii)$ the resulting matrix will have lower right corner $\operatorname{diag}(a,b,2)$ and
all other entries intact, which follows from the explicit form of $Q$. Repeating, if necessary, this procedure, we may similarly annihilate all nonzero off-diagonal  entries of matrix (\ref{orm4}) thus bringing it to the equivalent diagonal form
$$
\operatorname{diag}(q^{l_1}-1,\ldots,q^{l_{-2}}-1,(q^{l_{-1}}-1)/2,2).
$$
It remains to observe that
$$
\mathbb{Z}_{(q^{l_{-1}}-1)/2}\times \mathbb{Z}_2\cong \mathbb{Z}_{q^{l_{-1}/2}-1}\times \mathbb{Z}_{q^{l_{-1}/2}+1},
$$
which yields item $(iii)$ of the theorem.

Hence, we may suppose that either there is an odd $l_i$ (and then we assume that $l_{-1}$ is odd), or all $l_i$ are even and, for some $i$, $\varepsilon_i=-1$ (and we assume that $i=-1$). In both cases, we multiply (\ref{orm4}) on the left and right by $\operatorname{diag}(1,\ldots,1,P)$ and $\operatorname{diag}(1,\ldots,1,Q)$, where $P$ and $Q$ are given by (\ref{pq2}) and (\ref{pq1}) in the former and latter cases, respectively, for $a=(q^{l_{-1}}-\varepsilon_{-1})/2$
and $b=2$. By lemma \ref{mt}, which is applicable in the latter case, since $((q^{l_{-1}}+1)/2,2)=1$, we obtain the following matrix (after removing the row and column with the diagonal pivot $1$)
\begin{equation}\label{orm5}
\left(
\begin{array}{rrrr}
     \multicolumn{3}{c|}{}   & f_1 \\
               & D      & \multicolumn{1}{c|}{}   & \vdots    \\
     \multicolumn{3}{c|}{}  & f_{-2} \\
\cline{1-3}
       .       &  \ldots      &                   . & q^{l_{-1}}-\varepsilon_{-1}
\end{array}
\right),
\end{equation}
where $f_i=d_i(q^{l_{-1}}-\varepsilon_{-1})/2-d(q^{l_i}-\varepsilon_i)/2$ and $d=1$ or $2n$ in the former and latter cases, respectively. We now consider these cases separately.

{\em Case I.} Assume that $l_{-1}$ is odd. Then $d=1$. Observe that if we set $l_{-1}=1$ then (\ref{orm5}) will be row equivalent to (\ref{orm3}), which may be seen after adding to the $i$th row the last row multiplied by $(q+\varepsilon_{-1})(q^{l_i-2}+q^{l_i-4}+\ldots+q^{d_i})/2$. We will therefore incorporate the above-postponed case $l_{-1}=1$ into further consideration.

{\em Case Ia.} Suppose that there is odd $l_j$ such that $\varepsilon_j=-\varepsilon_{-1}$. Without loss of generality we may assume that $j=-2$. In this case $f_{-2}=-a+b$ is odd, where $a=(q^{l_{-2}}+\varepsilon_{-1})/2$ and $b=(q^{l_{-1}}-\varepsilon_{-1})/2$. Observe also that $(a,b)=1$ by Lemma \ref{nt}$(ii)$. Hence, we may apply Lemma \ref{mt}$(iv)$, where  $c=-a+b$.  Multiplying (\ref{orm5}) by $\operatorname{diag}(1,\ldots,1,P)$ and $\operatorname{diag}(1,\ldots,1,Q)$, where $P$ and $Q$ are given in (\ref{pq4}), and then removing the row and column with the diagonal pivot $1$, we obtain the matrix
\begin{equation}\label{orm6}
\left(
\begin{array}{rrrr}
     q^{l_1}-\varepsilon_1          &  \multicolumn{2}{c|}{}   & g_1    \\
               & \ddots   & \multicolumn{1}{c|}{}   & \vdots    \\
    & &\multicolumn{1}{c|}{q^{l_{-3}}-\varepsilon_{-3}}   & g_{-3} \\
\cline{1-3}
      . &  \ldots      &                   . & (q^{l_{-2}}-\varepsilon_{-2})(q^{l_{-1}}-\varepsilon_{-1})
\end{array}
\right),
\end{equation}
where $g_i=2af_i=d_i\cdot2ab-a(q^{l_i}-\varepsilon_i)$. An obvious column transformation annihilates the summand $a(q^{l_i}-\varepsilon_i)$ and hence $g_i$, too, if $l_i$ is even. If $l_i$ is odd then $d_i=1$ and the submatrix
$$
\left(
\begin{array}{rr}
q^{l_i}-\varepsilon_i & 2ab \\
. & 4ab
\end{array}
\right)
$$
can be diagonalized by Lemma \ref{mt}$(iii)$ using the matrices $P$ and $Q$ given in (\ref{pq3}).
Observe that Lemma \ref{mt}$(iii)$ can be applied, since
$$
(2ab)_2=\frac{1}{2}(q^{l_{-2}}+\varepsilon_{-1})_2(q^{l_{-1}}-\varepsilon_{-1})_2=\frac{1}{2}(q^2-1)_2\geqslant (q-\varepsilon_i)_2=(q^{l_i}-\varepsilon_i)_2
$$
and $l_i$ is odd, which implies by Lemma \ref{a2} that $(4ab,q^{l_i}-\varepsilon_i)=(2ab,q^{l_i}-\varepsilon_i)$ and the latter divides $2ab$. Note that this diagonalization does not affect the remaining entries of (\ref{orm6}) and
therefore can be done independently for all nonzero $g_i$ thus bringing the matrix to a diagonal form. This proves item $(i)$ of the theorem.

{\em Case Ib.} Suppose that, for every odd $l_j$, we have $\varepsilon_j=\varepsilon_{-1}$, and there is even $l_j$ such that $\varepsilon_j=-1$. Again, we may assume that $j=-2$. In this case, $f_{-2}=-a$ is odd, where $a=(q^{l_{-2}}+1)/2$. Set $b=(q^{l_{-1}}-\varepsilon_{-1})/2$. Observe that $(a,b)=1$ by Lemma \ref{nt}$(iii)$. Hence, we again apply Lemma \ref{mt}$(iv)$, where  $c=-a$, to bring (\ref{orm5}) to the form (\ref{orm6}),
where $g_i=2af_i$. The rest of the argument is as above, except that the application of  Lemma \ref{mt}$(iii)$ in the end is justified due to
$$
(2ab)_2=\frac{1}{2}(q^{l_{-2}}+1)_2(q^{l_{-1}}-\varepsilon_{-1})_2=(q-\varepsilon_{-1})_2 = (q-\varepsilon_i)_2=(q^{l_i}-\varepsilon_i)_2,
$$
since both $l_{-1}$ and $l_i$ are odd and $\varepsilon_{-1}=\varepsilon_i$ by assumption. This proves item $(ii)$ of the theorem.

{\em Case Ic.} Suppose that, for every odd $l_j$, we have $\varepsilon_j=\varepsilon_{-1}$ and, for every even $l_j$, we have  $\varepsilon_j=1$.

If $l_j$ is odd then $f_j=-a+b$, where $a=(q^{l_j}-\varepsilon_{-1})/2$ and $b=(q^{l_{-1}}-\varepsilon_{-1})/2$. By lemma \ref{nt}$(iv)$, we see that both $a/(a,b)$ and $b/(a,b)$ are odd. Hence, $2(a,b)$ divides $-a+b$ and  the submatrix
\begin{equation}\label{tm}
\left(
\begin{array}{rr}
2a & f_j \\
. & 2b
\end{array}
\right)
\end{equation}
of (\ref{orm5})
can be diagonalized by Lemma \ref{mt}$(iii)$ using the matrices $P$ and $Q$ given in (\ref{pq3}).
Observe that due to the form of $Q$ this diagonalization does not affect the remaining entries of
(\ref{orm5}) and hence may be repeated for all odd $l_j$ independently.

Similarly, if $l_j$ is even then we have $f_j=-a$, where $a=(q^{l_j}-1)/2$. Set
$b=(q^{l_{-1}}-\varepsilon_{-1})/2$. By lemma \ref{nt}$(v)$, we have $2(a,b)=q^{(l_j,l_{-1})}-\varepsilon_{-1}$ which
divides $-a$, since $l_j/(l_j,l_{-1})$ is even. Again, applying Lemma \ref{mt}$(iii)$ to the submatrix (\ref{tm}) independently for all even $l_j$, we complete the diagonalization of (\ref{orm5}). This proves item $(iv)$ of the theorem in this case.

{\em Case II.} Assume that all $l_i$ are even and $\varepsilon_{-1}=-1$. Then $d=2n$ and
$f_i=-n(q^{l_i}-\varepsilon_i)$ is a multiple of the diagonal pivot in the $i$th row. Obvious column transformations annihilate all $f_i$ and bring (\ref{orm5}) to a diagonal form. This completes the proof of item $(iv)$ of the theorem.

\begin{table}[htb]
\caption{Maximal tori in $\operatorname{Spin}^\pm_8(q)$ \label{sp8}}
\begin{tabular}{ll|ll}
\hline
\multicolumn{2}{c|}{$\operatorname{Spin}^+_8(q)$} & \multicolumn{2}{c}{$\operatorname{Spin}^-_8(q) \vphantom{A^{A^A}}$}\\
\hline
$[1,1,1,1]$ & $\mathbb{Z}_{q-1}\times\mathbb{Z}_{q-1}\times\mathbb{Z}_{q-1}\times\mathbb{Z}_{q-1}$ & $[1,1,1,\overline{1}\,]$ & $\mathbb{Z}_{q^2-1}\times\mathbb{Z}_{q-1}\times\mathbb{Z}_{q-1} \vphantom{A^{A^A}}$\\
$[1,1,\overline{1},\overline{1}\,]$ & $\mathbb{Z}_{q^2-1}\times\mathbb{Z}_{q+1}\times\mathbb{Z}_{q-1}$ & $[1,\overline{1},\overline{1},\overline{1}\,]$ & $\mathbb{Z}_{q^2-1}\times\mathbb{Z}_{q+1}\times\mathbb{Z}_{q+1}$\\
$[\,\overline{1},\overline{1},\overline{1},\overline{1}\,]$ & $\mathbb{Z}_{q+1}\times\mathbb{Z}_{q+1}\times\mathbb{Z}_{q+1}\times\mathbb{Z}_{q+1}$ & $[2,1,\overline{1}\,]$ & $\mathbb{Z}_{q^2-1}\times\mathbb{Z}_{q^2-1}$\\
$[2,1,1]$ & $\mathbb{Z}_{q^2-1}\times\mathbb{Z}_{q-1}\times\mathbb{Z}_{q-1}$ & $[1,1,\overline{2}\,]$ & $\mathbb{Z}_{(q^2+1)(q-1)}\times\mathbb{Z}_{q-1}$\\
$[2,\overline{1},\overline{1}\,]$ & $\mathbb{Z}_{q^2-1}\times\mathbb{Z}_{q+1}\times\mathbb{Z}_{q+1}$ & $[\overline{2},\overline{1},\overline{1}\,]$ & $\mathbb{Z}_{(q^2+1)(q+1)}\times\mathbb{Z}_{q+1}$\\
$[1,\overline{2},\overline{1}\,]$ & $\mathbb{Z}_{q^2+1}\times\mathbb{Z}_{q^2-1}$ & $[2,\overline{2}\,]$ & $\mathbb{Z}_{q^2+1}\times\mathbb{Z}_{q^2-1}$\\
$[2,2]^\pm$ & $\mathbb{Z}_{q^2-1}\times\mathbb{Z}_{q+1}\times\mathbb{Z}_{q+1}$ & $[1,\overline{3}\,]$ & $\mathbb{Z}_{(q^3+1)(q-1)}$\\
$[\overline{2},\overline{2}\,]$ & $\mathbb{Z}_{q^2+1}\times\mathbb{Z}_{q^2+1}$ & $[3,\overline{1}\,]$ & $\mathbb{Z}_{(q^3-1)(q+1)}$\\
$[3,1]$ & $\mathbb{Z}_{q^3-1}\times\mathbb{Z}_{q-1}$ & $[\,\overline{4}\,]$ & $\mathbb{Z}_{(q^4+1)}$\\
$[\,\overline{3},\overline{1}\,]$ & $\mathbb{Z}_{q^3+1}\times\mathbb{Z}_{q+1}$ \\
$[4]^\pm$ & $\mathbb{Z}_{q^2+1}\times\mathbb{Z}_{q^2-1} \vphantom{q_{q_{q_q}}}$\\
\hline
\end{tabular}
\end{table}

{\em Acknowledgement.} The author is thankful to Dr. A.\,Buturlakin for a useful discussion related to this paper.

\makeatletter
\renewcommand*{\@biblabel}[1]{\hfill\bf#1.}
\makeatother

\end{document}